\newtheorem{theorem}{Theorem}[section]
\newtheorem{definition}[theorem]{Definition}
\newtheorem{es}[theorem]{Example}
\newtheorem{problem}[]{Problem}
\def\phi{\varphi}
\title[Differential game on a convex set in the
plane]{Differential game of many pursuers with integral
constraints on a convex set in the plane}
\author[I. A. Alias]{Idham Arif Alias}
\address[Idham Arif Alias]{Department of Mathematics and Institute for Mathematical Research, Universiti Putra Malaysia, Serdang, Malaysia} \email{idham\_aa@upm.edu.my}
\author[G. Ibragimov]{Gafurjan Ibragimov}
\address[Gafurjan Ibragimov]{Department of Mathematics and Institute for Mathematical Research, Universiti Putra Malaysia, Serdang, Malaysia} \email{ibragimov@upm.edu.my}
\author[M. Ferrara]{Massimiliano Ferrara}
\address[Massimiliano Ferrara]{Department of Law and Economics, University Mediterranea of Reggio Calabria,
Italy, and CRIOS, Center for Research in Innovation, Organization
and Strategy, Department of Management and Technology, Bocconi
University, Italy} \email{massimiliano.ferrara@unirc.it
and[@unibocconi.it]}
\author[M. Salimi]{Mehdi Salimi}
\address[Mehdi Salimi]{Center for Dynamics, Department of Mathematics, Technische Universit{\"a}t Dresden, Germany, and MEDAlics, Research Centre at
the University Dante Alighieri Reggio Calabria, Italy}
\email{mehdi.salimi@tu-dresden.de and[@medalics.org]}
\author[M. Monsi]{Mansor Monsi}
\address[Mansor Monsi]{Department of Mathematics and Institute for Mathematical Research, Universiti Putra Malaysia, Serdang, Malaysia} \email{mmonsi@upm.edu.my}
\thanks{{\it 2010 Mathematics Subject Classification.} Primary:
91A23; Secondary: 49N75.}
\keywords{Differential Game, Control, Strategy, Integral
Constraint, State Constraint.}
\begin{document}
\begin{abstract}
 We study a simple motion differential game of many pursuers and one evader in the plane. We give a nonempty closed convex set in the plane, and the pursuers and evader move on this set. They cannot leave this set during the game. Control functions of players are subject to coordinate-wise integral constraints. If the state of the evader $y$, coincides with that of a pursuer $x_i$, $i=\{1,...,m\}$, at some time $t_i$ (unspecified), i.e. $x_i(t_i)=y(t_i)$, then we say that pursuit is completed. We obtain some conditions under which pursuit can be completed from any position of the players in the given set. Moreover, we construct strategies for the pursuers.
\end{abstract}
\maketitle

\section{Introduction}

Differential game comes into play to study procedures in which one
controlled object will be pursued by others. There are several
types of differential games, but the most common one is the so
called pursuit-evasion game. Significant researches by Breitner et
al. \cite{bre}, Chikrii \cite{chikrii}, Friedman \cite{1}, Isaacs
\cite{isa}, Krasovskii \cite{kra}, Petrosyan \cite{pet},
Pontryagin \cite{pon} and Rzymowski \cite{rzy} deal with
differential games and pursuit-evasion problems.

Satimov et al.\ study a linear pursuit differential game of many
pursuers and one evader with integral constraints on controls of
players in the space ${\rm I\!R}^n$ \cite{10}. In his paper, the
objects move according to the equations
$$\dot{z}_{i}=C_{i}z_{i}-u_{i}+v, \ \ \ z_i(t_0)=z_i^0, \ \ \ i=1,...,m$$
where $u_i$ and $v$   are the control parameters of the $i$-th
pursuer and the evader, respectively. One of the hypotheses of the
main result of his research work is that the eigenvalues of the
matrices $C_i$ are real. It was shown that if the total resource
of controls of the pursuers is greater than that of the evader,
then under certain conditions pursuit can be completed.

Differential game of one pursuer and one evader with integral
constraints studied by Ibragimov \cite{19} occurs on a closed
convex subset $S$ of ${\rm I\!R}^n$ and dynamics of the players
are described by the equations
$$\dot{x}=\alpha(t)u, \ \ \ x(0)=x_0, \ \ \ \int_{0}^{\infty}|u(s)|^2ds \leq \rho^2 $$ \
$$\dot{y}=\alpha(t)v, \ \ \ y(0)=y_0, \ \ \ \int_{0}^{\infty}|v(s)|^2ds \leq \sigma^2.$$
In the paper, evasion and pursuit problems were investigated and a
formula for optimal pursuit time was found and optimal strategies
of the players were constructed.

In the paper \cite{16}, a pursuit differential game of $m$
pursuers and $k$ evaders with integral constraints described by
the systems of differential equations
$$\dot{z}_{ij}=C_{ij}z_{i}+u_{i}-v_{j}, \ \ \ z_{ij}(t_0)=z_{ij}^{0}, \ \ \ i=1,...,m \ \ \ j=1,...,k,$$ \
$$\int_{0}^{\infty}|u_i(s)|^2ds \leq \rho_i^2 \ \ i=1,...,m \ \ \  \int_{0}^{\infty}|v_j(s)|^2ds\leq \sigma_j^2,\ \ \ j=1,...,k$$
where $u_i$ is the control parameter of the $i$-th pursuer, and $v_j$ is that of the $j$-th evader, was examined. Here, the eigenvalues of matrices  $C_{ij}$ are not necessarily real, and moreover, the number of evaders can be any.  Under assumption that the total resource of controls of the pursuers is greater than that of the evaders, that is

$$\rho_{1}^2+\rho_{2}^2+...+\rho_{m}^2>\sigma_{1}^2+\sigma_{2}^2+...
+\sigma_{k}^2,$$ and real parts of all eigenvalues of the matrices
$C_{ij}$ are nonpositive,  it was proved that pursuit can be
completed from any initial position.

In \cite{ibr2} Ibragimov and Salimi investigate a differential
game for inertial players with integral constraints under the
assumption that the control resource of the evader is less than
that of each pursuer. Ibragimov et al.\ in \cite{ibr3} investigate
an evasion from many pursuers in simple motion differential games
with integral constraints as well. Recently, in \cite{salimi}
Salimi et al.\ study a differential game in which countably many
dynamical objects pursue a single one. All the players perform
simple motions and the control of a group of pursuers are subject
to integral constraints and the control of the other pursuers and
the evader are subject to geometric constraints. They construct
optimal strategies for players and find the value of the game.

In the present paper, we consider a differential game of pursuers
and one evader with coordinate-wise integral constraints. Game
occurs in a nonempty closed convex set in the plane. We obtain
sufficient conditions of completion of the game. We attempt to
improve readers learning in control problems by constructing
strategies for the pursuers.

\section{Statement of the problem}

We consider a differential game described by the equations
\begin{equation}\label{(1)}
\dot{x}_i=u_i, \quad x_i(0)=x_{i0}, \quad i=1,...m, \quad
\dot{y}=v, \quad y(0)=y_0,
\end{equation}
where $x_{i}$, $u_{i}$, $y$, $v\in{\rm I\!R}^{2}$, \, $u_{i}$ is
control parameter of the pursuer $x_i$\, $i=1,...,m$ and $v$ is
that of the evader $y$.

\begin{definition} A measurable function $u_i(t)=(u_{i1}(t),u_{i2}(t))$\, $t\geq 0$, is called admissible control of the pursuer $x_i$ if
\begin{equation}\label{(2)}
\int_{0}^{\infty}|u_{ij}(s)|^2ds \leq \rho_{ij}^2,
\end{equation}
where $\rho_{ij}$, $i=1,...,m$, $j=1,2$, are given positive numbers.
\end{definition}

\begin{definition} A measurable function $v(t)=(v_{1}(t),v_{2}(t))$\, $t\geq 0$, is called admissible control of the evader if
\begin{equation}\label{(3)}
\int_{0}^{\infty}|v_{j}(s)|^2ds \leq \sigma_{j}^2,
\end{equation}
where $\sigma_{j}$, $j=1,2$ are given positive numbers.
\end{definition}

\begin{definition}\label{(def)} A Borel measurable function $U_i(x_i,y,v)=(U_{i1}(x_i,y,v),\\ U_{i2}(x_i,y,v))$, $U_i:{\rm I\!R}^6\rightarrow {\rm I\!R}^2$ is called a strategy of the pursuer $x_i$ if for any control of the evader $v(t)$, $t\geq 0$, the initial value problem
\begin{equation*}
\dot{x_i}=U_i(x_i, y, v(t)), \ \ x_i(0)=x_{i0}, \quad
\dot{y}=v(t), \ \ y(0)=y_0,
\end{equation*}
has a unique solution $(x_{i}(t),y(t))$ and the inequalities
$$\int_0^\infty |U_{i1}(x_i(s),y(s),v(s))|^2ds\leq \rho_{i1}^2, \quad \int_0^\infty |U_{i2}(x_i(s),y(s),v(s))|^2ds\leq \rho_{i2}^2$$
hold.
\end{definition}

\begin{definition} We say that pursuit can be completed from the initial position $\{x_{10},...x_{m0},y_0\}$ for the time $T$ in the game (\ref{(1)})-(\ref{(3)}), if there exist strategies $U_i$, $i=1,...m$, of the pursuers such that for any control $v=v(\cdot)$ of the evader the equality
$x_i(t)=y(t)$ holds for some $i\in\{1,...,k\}$  and $t\in[0, T]$.
\end{definition}

Given a nonempty closed convex set $N \subset {\rm I\!R}^n$,
according to the rule of the game, all players must not leave the
set $N$. This information describes a differential game of many
players with integral constraints on control functions of players.

\begin{problem}
Find a sufficient condition of completion of pursuit in the game
(\ref{(1)})-(\ref{(3)}).
\end{problem}

\section{Main result}

Now we formulate the main result of the paper.
\begin{theorem}\label{(the)} If the inequality
\begin{equation}\label{(4)}
\rho_{1j}^{2}+\rho_{2j}^{2}+...+\rho_{mj}^{2}>\sigma_{j}^{2}
\end{equation}
holds for a $j\in\{1,2\}$, then pursuit can be completed for a
finite time   $T$ in the game (\ref{(1)})-(\ref{(3)}) from any
initial position.
\end{theorem}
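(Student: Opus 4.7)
\emph{Proof plan.} My plan is to reduce the two-dimensional game to a simultaneous rendezvous at a single (large) time $T$. Without loss of generality assume the hypothesis holds for $j = 1$, i.e., $R_1 := \rho_{11}^2 + \dots + \rho_{m1}^2 > \sigma_1^2$, and set $\varepsilon = R_1 - \sigma_1^2 > 0$. First I would pick $T = T(x_{10}, \dots, x_{m0}, y_0, \varepsilon)$ large enough that the error estimates below are dominated by $\varepsilon$.

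For each pursuer $i$ the natural first candidate is the parallel-pursuit feedback
\[
U_{ij}(x_i, y, v) = v_j + \frac{y_j(0) - x_{i,j}(0)}{T}, \qquad t \in [0,T], \ j \in \{1,2\},
\]
extended by zero for $t > T$. A direct integration of the state equations yields $x_i(T) = y(T)$ for every $i$, so pursuit is completed at time $T$ in the sense of the definition above. Writing $u_{ij}(s) = v_j(s) + (y_j(0) - x_{i,j}(0))/T$, expanding the square, and using the identity $\int_0^T v_j(s)\, ds = y_j(T) - y_j(0)$, one obtains
\[
\int_0^T |u_{ij}(s)|^2\, ds \le \sigma_j^2 + \frac{C_{ij}}{T},
\]
where $C_{ij}$ depends only on the initial positions and $\operatorname{diam}(N)$ (using that every trajectory remains in the convex set $N$). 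The right-hand side lies below $\rho_{ij}^2$ once $T$ is large, provided $\rho_{ij}^2 > \sigma_j^2$.

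The \emph{main obstacle} is that this naive construction requires the pointwise inequality $\rho_{ij}^2 > \sigma_j^2$ for every pair $(i,j)$, which is strictly stronger than the theorem's hypothesis (a sum condition in a single coordinate, with nothing assumed in the other). To exploit only $R_1 > \sigma_1^2$, I would replace the coord-$1$ feedback by a cooperative one, assigning to pursuer $i$ the weight $\alpha_i = \rho_{i1}^2 / R_1$, so that $\sum_i \alpha_i = 1$ and $\alpha_i \sigma_1^2 < \rho_{i1}^2$, and letting pursuer $i$ absorb only an $\alpha_i$-fraction of $v_1$, plus a feedback correction designed to drive the would-be catcher onto $y_1(T)$ at time $T$. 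In coord $j = 2$, the decisive observation is that only the \emph{catching} pursuer actually needs to match $y_2(T)$, so the $L^2$ burden in the second coordinate can be placed selectively, relying on the boundedness of trajectories in $N$ to keep the correction $O(1/T)$ small. Producing a rigorous strategy that respects the integral constraints in \emph{both} coordinates and proving that at least one pursuer intercepts is the technical heart of the argument; this is precisely where the strict inequality $R_1 > \sigma_1^2$ supplies the $\varepsilon$-margin that absorbs the $O(1/\sqrt{T})$ corrections.
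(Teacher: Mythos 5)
Your proposal correctly diagnoses the difficulty (the naive parallel pursuit needs $\rho_{ij}^2>\sigma_j^2$ for every pair, which is stronger than the hypothesis), but the step you offer to overcome it is not a proof and, as sketched, would not work. Splitting $v_1$ \emph{in amplitude} --- letting pursuer $i$ track only the fraction $\alpha_i v_1$ with $\alpha_i=\rho_{i1}^2/R_1$ --- does not produce interception: the evader's actual position is driven by the full $v_1$, so a pursuer absorbing only a fraction of it drifts away from $y_1(T)$ by an amount proportional to $ (1-\alpha_i)\int_0^T v_1$, which is not $O(1/T)$. Likewise, your plan to ``place the $L^2$ burden in the second coordinate selectively on the catching pursuer'' is circular in a simultaneous scheme: which pursuer catches depends on the evader's future control, so it cannot be decided when the strategies are set, and nothing in the hypothesis gives any single pursuer enough coordinate-$2$ resource to track $v_2$ (the $\rho_{i2}$ are arbitrary positive numbers while $\sigma_2$ may be huge). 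You explicitly defer ``the technical heart of the argument,'' and that heart is exactly what is missing.

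The paper resolves both issues by splitting \emph{in time} rather than in amplitude, and by using the state constraint instead of tracking $v_2$. Setting $\sigma_{i1}=(\sigma_1/\rho_1)\rho_{i1}$ so that $\sum_i\sigma_{i1}^2=\sigma_1^2$ and $\sigma_{i1}<\rho_{i1}$, the pursuers act one at a time on consecutive intervals $[\theta_i,\theta_{i+1}]$. Pursuer $i$ first spends $\rho_{i1}^2-\sigma_{i1}^2$ of coordinate-$1$ energy moving at constant speed to align $x_{i1}=y_1$ (possible because $N$ has finite diameter $d$), then plays the P-strategy $u_{i1}=v_1$ to preserve that alignment while sweeping the vertical extent $c$ of $N$ at constant speed, which costs a \emph{fixed} amount $\rho_{i2}^2$ of coordinate-$2$ energy independent of $\sigma_2$ --- this is where the convex state constraint does the work your $O(1/T)$ estimate cannot. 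Pursuer $i$ fails only if the evader spends more than $\sigma_{i1}^2$ of coordinate-$1$ energy during pursuer $i$'s slot; since the slots are disjoint and $\sum_i\sigma_{i1}^2=\sigma_1^2$, the evader cannot defeat all $m$ pursuers. This pigeonhole over disjoint time intervals is the idea your proposal lacks, and without it (or an equivalent mechanism) the argument does not close.
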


\begin{proof}
We prove the theorem when the equality (\ref{(4)}) holds at $j=1$.
The same reasoning applies to the case $j=2$. Thus,
$$\rho_{11}^{2}+\rho_{21}^{2}+...+\rho_{m1}^{2}>\sigma_{1}^{2}$$
and $\rho_{i2}, \ i = 1,2,..., m$, are positive numbers.
Denote
$$\sigma_{i1}=\frac{\sigma_1}{\rho_1}\rho_{i1},\, i=1,2,...,m; \ \
\rho_1=(\rho_{11}^2+\rho_{21}^2+...+\rho_{m1}^2)^{1/2}.$$ Clearly,
$\sigma_{i1}<\rho_{i1}$. Let $x_0$ and $y_0$ are points of the set
$N$ such that $|x_0-y_0|= diam \ N := \max_{x,y\in N}|x-y|$. We pass
the $x$-axis through the points $x_0$ and $y_0$. Let
$d=|x_0-y_0|$, $c=\max_{(\xi, \eta)\in N}|\eta|$ . Without any
loss of generality, we assume that all pursuers at the beginning
are on the $x$-axis, since otherwise we can bring to such position
by applying the following controls:

\begin{equation}\label{4a}
u_1(t)=0, \ \ u_2(t)=-\frac{x_{i2}^0}{T}, \ \ 0\leq t\leq T; \ \ T = \max_{i\in\{1,...,m\}}\frac{4|x_{i2}^0|^2}{\rho_{i2}^2},
\end{equation}
and then we consider the game with $\rho_{i2}'=(\sqrt{3}/2)\rho_{i2}$ instead of $\rho_{i2}$.

The strategy of each pursuer is constructed in two stages. In the
first stage, the pursuer moves with constant speed along $x$-axis
to become on one vertical line with the evader. In the second
stage, the pursuer uses P-strategy.

Let us construct a strategy for the pursuer $x_i$, $i=1,...,m$. This pursuer moves according to the strategy to be constructed on an interval $[\theta_i, \theta_{i+1}]$; $\theta_1=0$. We define $\theta_{i+1}$ inductively below. Set

\begin{equation}\label{(5)}
\begin{split}
u_{i1}(t)&=sgn(y_1(\theta_i)-x_{i1}(\theta_i))\frac{d}{t_{i1}},\\
\quad u_{i2}(t)&=0, \quad \theta_i<t\leq\theta_i+\tau_{i1}, \quad
i=1,...,m,
\end{split}
\end{equation}where $\tau_i$, $0 \leq \tau_i\leq t_{i1}$, is a time for which
$x_{i1}(\theta_i+\tau_{i1})=y_{1}(\theta_i+\tau_{i1})$, $t_{i1} = \frac{d^2}{\rho_{i1}^2 - \sigma_{i1}^2}$. Clearly,
such a time $\tau_{i1}$ exists since
$$
\left|\int\limits_{\theta_i}^{\theta_i+t_{i1}}u_{i1}(t)dt \right| = t_{i1}\cdot \frac{d}{t_{i1}} = d,
$$
that is on the time interval $[\theta_i, \theta_i + t_{i1}]$ the pursuer $x_i$ can travel the distance equal to $d$.

We now construct the second part of the strategy of the pursuer $x_i$. Set

\begin{equation}\label{(6)}
\begin{split}
&u_{i1}(t)=v_1(t), \quad u_{i2}(t)=sgn(y_2(\theta_i+\tau_{i1})-x_{i2}(\theta_i+\tau_{i1}))\frac{c}{t_{i2}},\\
&\theta_i+\tau_i<t\leq \theta_{i+1}, \quad \text{if} \quad \
x_i(t)\in int \ N,\\
&\text{and}\\
&u_{i1}(t)=0, \ \ u_{i2}(t)=0, \quad \text{if} \quad  x_i(t)\in
\partial N,
\end{split}
\end{equation}
where $\partial N$ is boundary of the set $N$, $int \ N$ is
interior of $N$, \ $t_{i2}=\frac{c^2}{\rho_{i2}^2}$, \
$\theta_{i+1}=\theta_i+\tau_{i1}+t_{i2}$. On the time interval
$[\theta_{i},\theta_{i+1}]$  all the other pursuers $x_j$,
$j\in\{1,...,i-1,i+1,...m\}$ do not move.  In the other words,
$$u_i(t)\equiv0, \ \ j\in\{1,...,i-1,i+1,...m\}, \ \ t\in [\theta_i, \theta_{i+1}].$$
Note that
$$x_{i1}(t)=y_1(t), \ \ \theta_i+\tau_{i1}\leq t \leq \theta_{i+1},$$
whenever
\begin{equation}\label{(7)}
\int_{\theta_{i}+\tau_{i1}}^{\theta_{i+1}}v_1^2(t)dt\leq
\sigma_{i1}^2,
\end{equation}
since under this condition we have
\begin{equation*}
\begin{split}
\int_{\theta_{i}}^{\theta_{i+1}}u_{i1}^2(t)dt&=
\int_{\theta_{i}}^{\theta_i+\tau_{i1}}u_{i1}^2(t)dt+
\int_{\theta_i+\tau_{i1}}^{\theta_{i+1}}u_{i1}^2(t)dt\\
&=\frac{a^2}{t_{i1}}+
\int_{\theta_i+\tau_{i1}}^{\theta_{i+1}}v_1^2(t)dt\\
&\leq \rho_{i1}^2-\sigma_{i1}^2+\sigma_{i1}^2=\rho_{i1}^2.
\end{split}
\end{equation*}
Therefore, if (\ref{(7)}) holds then the pursuer $x_i$ is able to
apply the strategy (\ref{(6)}) that ensures the equality
$$x_{i2}(\theta_i+\tau_{i1}+\tau_{i2})=y_2(\theta_i+\tau_{i1}+\tau_{i2})$$
for some $0\leq \tau_{i2}\leq t_{i2}$. If (\ref{(7)}) fails to
hold for a control of the evader $v(t)=(v_1(t), v_2(t))$, then
pursuit may not be completed by the pursuer $x_i$. However,
(\ref{(7)}) holds true at least for one index $i\in \{1,...,m\}$.
Indeed, if
$$\int_{\theta_i+\tau_{i1}}^{\theta_{i+1}}v_1^2(t)dt>\sigma_{i1}^2$$
for all $i\in \{1,...,m\}$, then we have
$$ \int_{0}^{\theta_{m+1}}v_1^2(t)dt=\sum_{i=1}^{m}
\int_{\theta_i}^{\theta_{i+1}}v_1^2(t)dt\geq
\sum_{i=1}^{m}\int_{\theta_i+\tau_{i1}}^{\theta_{i+1}}v_1^2(t)dt>
\sum_{i=1}^{m}\sigma_{i1}^{2}=\sigma_1^2,$$ which is a
contradiction. Thus the inequality (\ref{(7)}) holds at least for
one index $i$ and then pursuit is completed by the pursuer $x_i$.
And the proof is complete.
\end{proof}

\section{Discussion and conclusion}

We have obtained a sufficient condition (\ref{(4)}) to complete
the pursuit in the differential game of $m$ pursuers and one
evader with coordinate-wise integral constraints. We have
constructed strategies of the pursuers and showed that pursuit can
be completed from any initial position in $N$.

Let us discuss information which is used by the pursuer. To bring
positions of all pursuers to $x$-axis by the control (\ref{4a}),
the pursuers use only their initial positions. Next, the pursuer
$x_i$ using $y_1(\theta_i)$ and $x_{i1}(\theta_i)$ constructs
$u_i(t)$ at the time $\theta_i$ by formula (\ref{(5)}) and uses it
on $(\theta_i, \theta_i + \tau_{i1}]$. While using (\ref{(5)}) the
pursuer $x_i$ observes whether $x_{i1}(t) = y_1(t)$. This is
possible since by Definition \ref{(def)}, the pursuer $x_i$ is
allowed to know  $x_{i1}(t)$ and $y_1(t)$. Finally,  as soon as
$x_{i1}(t) = y_1(t)$ at some time $t = \theta_i + \tau_{i1},$ the
pursuer $x_i$ uses (\ref{(6)}), which requires to know $v_1(t)$.
By Definition \ref{(def)}, the pursuer $x_i$ knows $v(t)$, in
particular, knows $v_1(t)$ as well.

\begin{es}
Let $m=2, \ \rho_{11} = 1, \ \rho_{21} = 1.1, \ \sigma_1 =
\sqrt{2}$ and $\rho_{12}, \ \rho_{22}$ be any positive numbers and
$$
N = \left\{(x,y)\left| \ \ \frac{x^2}{9} + \frac{y^2}{4} \le 1  \right.\right\}.
$$
Then one can see that $d = 6$, and $c = \max\{|y| \ ; \ (x,y) \in
N\} = 2$ and by theorem \ref{(the)} pursuit can be completed.
\end{es}

\section{Acknowledgement}

This research was supported by the Research Grant of the
Universiti Putra Malaysia, No. 05-02-12-1868RU, and by MEDAlics,
the Research Centre at the University Dante Alighieri, Reggio
Calabria, Italy.

\end{document}